\documentclass[a4paper,reqno]{amsart}

\usepackage{enumerate}
\usepackage[usenames]{color}

\usepackage[colorlinks=true]{hyperref}
\hypersetup{urlcolor=blue, linkcolor=blue, citecolor=red}

\numberwithin{equation}{section}

\newtheorem{thm}{Theorem}[section]

\newtheorem{prop}[thm]{Proposition}

\theoremstyle{definition}
\newtheorem{rem}[thm]{Remark}

\newcommand\R{{\mathbb R}}
\newcommand\C{{\mathbb C}}

\newcommand\im{{\mathrm{Im}}\,}
\newcommand\Tma{T_{\mathrm{max}}}

\newcommand\Cz{{C_0(\R^N )}}

\newcommand\DI{u_0 }

\newcommand\Tsem{{\boldsymbol{\mathcal T}}}
\newcommand\Est{{ e^\star }}
\newcommand\Cgn{K}

\newcommand\MScN[1]{\href{http://www.ams.org/mathscinet-getitem?mr=#1}{\nolinkurl{(#1)}}}
\newcommand\DOI[1]{\href{http://dx.doi.org/#1}{(doi: \nolinkurl{#1})}}
\newcommand\LINK[1]{\href{#1}{(link: \nolinkurl{#1})}}

\title{Finite-time blowup for a complex Ginzburg-Landau equation with linear driving}

\author[Thierry Cazenave]{Thierry Cazenave$^1$}
\author[Jo\~ao Paulo Dias]{Jo\~ao Paulo Dias$^{\dag,2}$}
\author[M\'ario Figueira]{M\'ario Figueira$^{\dag,2}$}

\address{$^1$Universit\'e Pierre et Marie Curie \& CNRS, Laboratoire Jacques-Louis Lions,
B.C. 187, 4 place Jussieu, 75252 Paris Cedex 05, France}

\address{$^{2}$Centro de Matem\'atica e Aplica\c c\~oes Fundamentais, Universidade de Lisboa, Avenida Prof. Gama Pinto 2, 1649--003 Lisboa, Portugal}

\email[Thierry Cazenave]{\href{mailto:thierry.cazenave@upmc.fr}{thierry.cazenave@upmc.fr}}
\email[Jo\~ao Paulo Dias]{\href{mailto:dias@ptmat.fc.ul.pt}{dias@ptmat.fc.ul.pt}}
\email[Zheng Han]{\href{mailto:figueira@ptmat.fc.ul.pt}{figueira@ptmat.fc.ul.pt}}

\subjclass[2010] {35Q56, 35B44}

 \keywords{Complex Ginzburg-Landau equation, damping, finite-time blowup, energy, variance}

\thanks{$^\dag$ Research partially supported by the Portuguese Foundation for Science and Technology (FCT) through the grant PTDC/MAT/110613/2009 and by PEstOE/MAT/UI 0209/2011.}

\begin{document}

\begin{abstract}
In this paper, we consider the complex Ginzburg--Landau equation $u_t =  e^{ i\theta } [\Delta u +  |u|^\alpha u] + \gamma u$ on $\R^N $, where $\alpha >0$, $\gamma \in \R$ and  $-\pi /2<\theta <\pi /2$.
By convexity arguments we prove that, under certain conditions on $\alpha ,\theta ,\gamma $, a class of solutions with negative initial energy  blows up in finite time. 
\end{abstract}

\maketitle

\section{Introduction} \label{Intro} 
In this article, we consider the complex Ginzburg-Landau equation
\begin{equation} \label{GL} 
\begin{cases} 
 u_t =  e^{ i\theta } [\Delta u +  |u|^\alpha u] + \gamma u, \\
u(0)= \DI,
\end{cases} 
\end{equation} 
in $\R^N $, where $ -\frac {\pi } {2} \le \theta \le \frac {\pi } {2}$,  $\alpha >0$ and $\gamma \in \R$\footnote{In principle, one could let $\gamma $ be complex, but the imaginary part can be eliminated by the change of variable $v(t,x)= e^{ - i t \Im \gamma } u(t,x)$.}, and we look for conditions on the initial value $\DI$ and the parameters $\theta ,\alpha $ and $\gamma $ that ensure finite-time blowup of the solution.
Equation~\eqref{GL}  is a particular case of the more general complex Ginzburg--Landau equation
\begin{equation} \label{GGL} 
 u_t= e^{i\theta }\Delta u + e^{i\phi } |u|^\alpha u+ \gamma u, 
\end{equation} 
 which is used to model such phenomena as  superconductivity, chemical turbulence, and various types of fluid flows;
see~\cite{DoeringGHN} and the references cited therein. 
Note that the solutions of equation~\eqref{GL} satisfy certain energy identities (see Section~\ref{Cauchy}), which are not shared by the solutions of~\eqref{GGL}. 

Equation~\eqref{GL} with  $\theta =0$ is nonlinear heat equation 
\begin{equation} \label{NLH} 
\begin{cases} 
u_t-\Delta u= |u|^\alpha u +\gamma u, \\ u(0)= \DI,
\end{cases} 
\end{equation} 
while for $\theta = \pm \pi /2$, \eqref{GL} reduces to the nonlinear  Schr\"o\-din\-ger equation 
\begin{equation} \label{NLS} 
\begin{cases} 
u_t = \pm i( \Delta u+  |u|^\alpha u) +\gamma u , \\ u(0)= \DI.
\end{cases} 
\end{equation} 
In particular, \eqref{GL} is ``intermediate" between the nonlinear heat and Schr\"o\-din\-ger equations.

We recall (see Section~\ref{Cauchy}) that the Cauchy problem~\eqref{GL} is locally well-posed in $H^1 (\R^N ) \cap \Cz$, where $\Cz$ is the space of  continuous functions $\R^N \to \C$ which vanish at infinity, equipped with the sup norm. 
In particular, given any  $\DI \in H^1 (\R^N ) \cap \Cz$, there exists a unique solution $u$ of~\eqref{GL} defined on a maximal interval $[0, \Tma)$, i.e., $u\in C([0, \Tma ), H^1 (\R^N ) \cap \Cz)$. If the maximal existence time $\Tma $ is finite, then the solution blows up at $\Tma$ in $  \Cz$.

The effect of the driving term $\gamma u$ can be easily seen on the ODE associated with~\eqref{GL}, i.e.,
\begin{equation} \label{fODE} 
v' = e^{ i\theta }   |v|^\alpha v + \gamma v.
\end{equation}  
The solution of~\eqref{fODE} with the initial condition $v(0)= v_0\in \C$, is given by 
\begin{equation}  \label{fODE2} 
v(t)= e^{\gamma t}  \Bigl[ 1- \frac {e^{\alpha \gamma t}-1} {\gamma }  |v_0|^\alpha \cos \theta   \Bigr]^{- \frac {1} {\alpha } (1+i \tan \theta )} v_0, 
\end{equation} 
as long as this formula makes sense. (The term $ \frac {e^{\alpha \gamma t}-1} {\gamma }$ must be replaced by $\alpha t$ if $\gamma =0$.)
If $\gamma =0$, then we see that for every $v_0\not = 0$, the solution blows up in finite time. The same conclusion holds if $\gamma >0$. On the other hand, when $\gamma <0$, whether or not $v$ blows up depends on the size of $ |v_0|$. More precisely, if $ |v_0| > \frac {-\gamma } {\cos \theta }$, then $v$ blows up in finite time, whereas if $ |v_0| \le \frac {-\gamma } {\cos \theta }$, then $v$ is global.

When   $\gamma =0$, finite-time blowup for equation~\eqref{GL}  is known to occur under a negative energy condition. More precisely, let the energy $E$ be defined by
\begin{equation} \label{fEn1} 
E(w)= \frac {1} {2}\int  _{ \R^N  } |\nabla w|^2 - \frac {1} {\alpha +2} \int  _{ \R^N  } |w|^{\alpha +2},
\end{equation} 
for $w\in H^1 (\R^N ) \cap \Cz$. 
If $\theta =0$, then it follows from Levine~\cite{Levine} that the solution of the nonlinear heat equation~\eqref{NLH} blows up in finite time if $\DI\in H^1 (\R^N ) \cap \Cz$ satisfies $E(\DI) <0$. 
For $-\frac {\pi } {2}< \theta < \frac {\pi } {2}$,  negative energy initial values also yield finite-time blowup for the equation~\eqref{GL}, see~\cite{CazenaveDW}. 
If $\theta =\pm \frac {\pi } {2}$, then the solution of the nonlinear Schr\"o\-din\-ger equation~\eqref{NLS} blows up in finite time provided $\frac {4} {N} \le  \alpha <\frac {4} {N-2}$ and the initial value $\DI\in H^1 (\R^N ) $ satisfies $E(\DI) <0$ and $ |\cdot |\DI\in L^2 (\R^N ) $. (See Zakharov~\cite{Zakharov} and Glassey~\cite{Glassey}.) 

If   $\gamma >0$, obvious modifications of the arguments used when $\gamma =0$ provide similar results. In particular, if the initial value $\DI\in H^1 (\R^N ) \cap \Cz$ satisfies $E (\DI) <0$, then the corresponding solution of~\eqref{NLH} blows up in finite time. Moreover, if $\frac {4} {N} \le  \alpha <\frac {4} {N-2}$ and the initial value $\DI\in H^1 (\R^N ) $ satisfies $E(\DI) <0$ and $ |\cdot |\DI\in L^2 (\R^N ) $, then the solution of~\eqref{NLS} blows up in finite time. The situation is similar for general $-\frac {\pi } {2}< \theta <\frac {\pi } {2}$, and a simple modification of the argument of~\cite{CazenaveDW} shows finite-time blowup for initial values with negative energy. More precisely, we have the following result.

\begin{thm} \label{ePos1} 
Assume
\begin{equation} \label{fGGLuA} 
-\frac {\pi } {2} < \theta < \frac {\pi } {2},
\end{equation} 
 $\alpha >0$ and $\gamma >0$. 
Let $\DI\in H^1 (\R^N ) \cap \Cz$ and let $u\in C([0, \Tma), H^1 (\R^N ) \cap  \Cz)$ be the corresponding maximal solution of~\eqref{GL}.  If $E( \DI) <0$, 
where $E$ is defined by~\eqref{fEn1}, then $u$ blows up in finite time, i.e., $\Tma <\infty $. 
\end{thm}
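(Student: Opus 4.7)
The plan is to adapt the convexity argument of~\cite{CazenaveDW}---which establishes the analogous statement for $\gamma=0$---by first performing a gauge change that absorbs the driving term. Set $v(t,x):=e^{-\gamma t}u(t,x)$, so that~\eqref{GL} becomes
\begin{equation*}
v_t = e^{i\theta}\bigl[\Delta v + e^{\alpha\gamma t}|v|^\alpha v\bigr].
\end{equation*}
The natural time-dependent energy is $\widetilde E(v,t):=\tfrac12\int_{\R^N}|\nabla v|^2 - \tfrac{e^{\alpha\gamma t}}{\alpha+2}\int_{\R^N}|v|^{\alpha+2}$, which is related to the original energy by $\widetilde E(v(t),t)=e^{-2\gamma t}E(u(t))$. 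Multiplying the equation for $v$ by $\overline{v_t}$, taking real parts and integrating by parts, the key dissipation identity
\begin{equation*}
\tfrac{d}{dt}\widetilde E(v(t),t) = -\cos\theta\int_{\R^N}|v_t|^2 - \tfrac{\alpha\gamma\, e^{\alpha\gamma t}}{\alpha+2}\int_{\R^N}|v|^{\alpha+2}\le 0
\end{equation*}
follows at once. Hence $\widetilde E(v(t),t)\le E(\DI)<0$ on $[0,\Tma)$, which translates to $E(u(t))\le e^{2\gamma t}E(\DI)<0$: the negativity of the energy is preserved (and in fact strengthened) by the driving.

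Next, I would run the Levine-type concavity method on $M(t):=\|u(t)\|_{L^2}^2$. A direct computation using~\eqref{GL} gives
\begin{equation*}
M'(t)=2\gamma M(t)+2\cos\theta\Bigl[-\!\int_{\R^N}\!|\nabla u|^2+\!\int_{\R^N}\!|u|^{\alpha+2}\Bigr]=2\gamma M(t)-4\cos\theta\,E(u(t))+\tfrac{2\alpha\cos\theta}{\alpha+2}\!\int_{\R^N}\!|u|^{\alpha+2},
\end{equation*}
which is strictly positive by the previous step. Introduce $\Phi(t):=\int_0^t M(s)\,ds + B$ for a constant $B>0$ chosen below, so that $\Phi'=M$ and $\Phi''=M'$. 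Applying Cauchy--Schwarz in $(s,x)$ to the identity $M(t)-M(0)=2\,\re\!\int_0^t\!\int\overline{u}\,u_s$ bounds $(\Phi'-M(0))^2$ in terms of $\Phi\cdot\!\int_0^t\|u_s\|_{L^2}^2\,ds$, and the dissipation identity for $\widetilde E$ controls this last quantity through $E(\DI)-\widetilde E(v(t),t)$. Combining these ingredients should yield an inequality of the form $\Phi\Phi''\ge(1+\eta)(\Phi')^2$ for some $\eta>0$, from which the standard conclusion follows: the positive function $\Phi^{-\eta}$ is concave with negative initial slope, hence vanishes in finite time, forcing $\Tma<\infty$.

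The hardest step is to close the concavity inequality with all constants tracked, juggling the complex phase $e^{i\theta}$, the extra $2\gamma M$ term in $M'$, and the lower bound on $-E(u(t))$ coming from the first step. The phase is dealt with as in~\cite{CazenaveDW}, essentially by multiplying by $e^{-i\theta}$ before splitting into real and imaginary parts. The new $\gamma$-dependent contributions all have the sign favorable to blowup---the driving term reinforces the growth of $M$, and the energy only becomes more negative in time---so no new smallness condition on $\DI$ is needed beyond $E(\DI)<0$. This is why one expects the modification of the $\gamma=0$ proof to be essentially routine, as the authors announce.
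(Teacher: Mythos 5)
Your first step (the gauge change $v=e^{-\gamma t}u$, the time-dependent energy $\widetilde E$, its dissipation identity, and the conclusion that negativity of the energy is preserved) is exactly how the paper begins its proof, cf.~\eqref{fPos1}--\eqref{fPos7b}. The gap is in the second half. The paper does \emph{not} run Levine's integrated concavity method on $\Phi(t)=\int_0^t\|u\|_{L^2}^2+B$; it derives a pointwise differential inequality for $\widetilde f(t)=\|v(t)\|_{L^2}^2$. The mechanism that makes this work for \emph{all} $|\theta|<\pi/2$ is the exact identity $\int\overline v\,v_t=-e^{i\theta}\,\widetilde\jmath$ (see~\eqref{fPos6b}), so that $\bigl|\int\overline v\,v_t\bigr|^2=\widetilde\jmath^{\,2}$ with no loss of a $\cos$ factor; combined with the pointwise Cauchy--Schwarz inequality and the dissipation of $\widetilde e$ this gives $-\widetilde f\,\widetilde e\,'\ge-\frac{\alpha+2}{2}\widetilde e\,\widetilde f\,'$, hence monotonicity of $-\widetilde e\,\widetilde f^{-(\alpha+2)/2}$ and finally $\widetilde f\,'\ge c\,\widetilde f^{(\alpha+2)/2}$, which blows up in finite time (see~\eqref{fPos8}--\eqref{fPos15}).

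Your proposed route, by contrast, only ever uses the \emph{real part} $\Phi''=2\re\int\overline u\,u_t$, and this is where the argument does not close as stated. In the Levine scheme the lower bound on $\Phi''$ comes from $\Phi''=-2\cos\theta\,\widetilde\jmath\ge-2(\alpha+2)\cos\theta\,\widetilde e$, and the dissipation identity converts $-\widetilde e(t)$ into $\cos\theta\int_0^t\|v_s\|_{L^2}^2$ plus a constant, so that $\Phi''\gtrsim 2(\alpha+2)\cos^2\theta\int_0^t\|v_s\|_{L^2}^2$, while the time-integrated Cauchy--Schwarz bound gives $(\Phi'-\Phi'(0))^2\le 4(\Phi-B)\int_0^t\|v_s\|_{L^2}^2$ with no compensating $\cos$ factor. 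Tracking the constants, the inequality $\Phi\Phi''\ge(1+\eta)(\Phi')^2$ can only be obtained (for some $\eta>0$) when $(\alpha+2)\cos^2\theta>2$, a structural restriction on $(\alpha,\theta)$ that is \emph{not} assumed in Theorem~\ref{ePos1}; for $\theta$ close to $\pm\pi/2$ your concavity inequality fails for every fixed $\alpha$. This loss of $\cos^2\theta$ is precisely what the argument of~\cite{CazenaveDW}, reproduced in Section~\ref{sPos}, is designed to avoid, and ``multiplying by $e^{-i\theta}$ before splitting into real and imaginary parts'' does not repair it within the $\Phi$-framework. (A further, smaller, inconsistency: your Cauchy--Schwarz step is written for $u$ while the dissipation identity controls $\int_0^t\|v_s\|_{L^2}^2$, not $\int_0^t\|u_s\|_{L^2}^2$; the whole computation must be done in the $v$ variable.) So the key closing step of your proof is missing, and as sketched it would prove the theorem only under an additional hypothesis of the type $(\alpha+2)\cos^2\theta>2$.
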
 

When $\gamma <0$, the situation is more delicate.
For the nonlinear heat equation~\eqref{NLH}, Levine's calculations~\cite{Levine} can be adapted in order to show that if the initial value $\DI\in H^1 (\R^N ) \cap \Cz$ satisfies $E_\gamma (\DI) <0$, where
\begin{equation*} 
E_\gamma (w)=  \frac {1} {2}\int  _{ \R^N  } |\nabla w|^2 - \frac {1} {\alpha +2} \int  _{ \R^N  } |w|^{\alpha +2} - \frac {\gamma } {2} \int  _{ \R^N  } |w|^{2},
\end{equation*} 
then the solution of~\eqref{NLH} blows up in finite time. (See~\cite[Theorem~17.6]{QuittnerS}.) 
For the nonlinear Schr\"o\-din\-ger equation~\eqref{NLS}, Glassey's proof~\cite{Glassey} is not immediately applicable. Sufficient conditions for finite-time blowup were obtained by M.~Tsutsumi~\cite{MTsu} (see also~\cite{OhtaT}) by a delicate modification of the variance argument of~\cite{Glassey}. It follows in particular from the calculations in~\cite{MTsu, OhtaT} that if $\frac {4} {N} < \alpha <\frac {4} {N-2}$ and the initial value $\DI\in H^1 (\R^N ) $ satisfies
\begin{equation*} 
E(\DI) + \frac {\alpha \gamma } {N\alpha -4} \im \int  _{ \R^N  }\DI (x\cdot \nabla  \overline{\DI} ) + \frac {\alpha ^2\gamma ^2} {(N\alpha -4)^2} \int  _{ \R^N  } |x|^2  |\DI |^2 <0,
\end{equation*} 
then the solution of~\eqref{NLS} blows up in finite time. Note that the above condition becomes stronger and stronger as $\alpha \downarrow \frac {4} {N}$, and that no energy-type sufficient condition is known for blowup if $\alpha =\frac {4} {N}$. (The case $\alpha =\frac {4} {N}$ is studied in~\cite{Mohamad} by a very different method.)

For the equation~\eqref{GL} with $\gamma <0$, we have the following result. 

\begin{thm} \label{eMain1} 
Assume
\begin{equation} \label{eHyp1}
-\frac {\pi } {4} < \theta  < \frac {\pi } {4}, 
\end{equation} 
$\alpha >0$ and $\gamma <0$. 
Let $\DI\in H^1 (\R^N ) \cap \Cz$ and let $u\in C([0, \Tma), H^1 (\R^N ) \cap  \Cz)$ be the corresponding maximal solution of~\eqref{GL}.  Suppose further that
\begin{equation}  \label{eHyp2} 
(\alpha +2) \cos (2\theta ) + 2 (1-\cos \theta )\ge 2 \cos \theta. 
\end{equation} 
If
\begin{equation} \label{eHyp3} 
E( \DI) + \xi  \int  _{ \R^N  }  | \DI |^2 <0,
\end{equation} 
where $E$ is defined by~\eqref{fEn1} and 
\begin{equation} \label{eHyp4} 
\xi  = - \frac {\gamma } {\cos \theta } \max  \Bigl\{ \frac {1} {\alpha }, \frac {(\alpha +2) \cos (2\theta ) + 2 (1-\cos \theta )} {2} \Bigr\},
\end{equation} 
then $u$ blows up in finite time, i.e., $\Tma <\infty $. 
\end{thm}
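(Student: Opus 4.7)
\medskip
\noindent\textbf{Proof plan.}
The plan is to adapt the Levine-type convexity argument of~\cite{CazenaveDW} (which treats $\gamma=0$, $|\theta|<\pi/2$) to the damped setting $\gamma<0$, by replacing $E$ by a modified functional incorporating the $L^2$-norm that compensates for the loss of monotonicity of $E$. Setting $N(t):=\|u(t)\|_{L^2}^2$, testing equation~\eqref{GL} against $\bar u$ and taking the real part yields
\[
\frac{1}{2}N'(t) = \cos\theta\bigl[\|u\|_{L^{\alpha+2}}^{\alpha+2}-\|\nabla u\|_{L^2}^2\bigr]+\gamma N(t),
\]
while testing against $e^{-i\theta}\bar u_t$, after using the equation to evaluate $\im \int u\bar u_t$ in terms of $N$ and $N'$, gives an identity of the form
\[
\frac{d}{dt}E(u(t)) = -\cos\theta\,\|u_t\|_{L^2}^2 + \frac{\gamma\cos(2\theta)}{2\cos\theta}\,N'(t) + \frac{\gamma^2\sin^2\theta}{\cos\theta}\,N(t).
\]

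\medskip
\noindent\textbf{Modified-energy monotonicity.}
Next I introduce $\GVar(t):=E(u(t))+\xi N(t)$ with $\xi$ as in~\eqref{eHyp4}. Combining the two identities above with the Cauchy--Schwarz bound $\|u_t\|_{L^2}^2\ge |\!\int u\bar u_t\,dx|^2/N(t)$, and using Step~1 to compute both the real and imaginary parts of $\int u\bar u_t$, produces a differential inequality for $\GVar'$ whose right-hand side is a quadratic form in $N'$ and $N$. The restriction $|\theta|<\pi/4$ guarantees $\cos(2\theta)>0$, and the algebraic hypothesis~\eqref{eHyp2} is exactly what forces the discriminant of this quadratic form to be non-positive, provided $\xi$ is at least the second entry of the max in~\eqref{eHyp4}. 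Hence $\GVar'\le 0$, and by~\eqref{eHyp3} one has $\GVar(t)\le\GVar(0)<0$ throughout $[0,\Tma)$.

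\medskip
\noindent\textbf{Concavity and blowup.}
Finally, substituting $E=\GVar-\xi N$ into the $N'$-identity yields
\[
N'(t) = -4\cos\theta\,\GVar(t) + \frac{2\alpha\cos\theta}{\alpha+2}\,\|u\|_{L^{\alpha+2}}^{\alpha+2} + (4\xi\cos\theta+2\gamma)\,N(t),
\]
and the first entry $1/\alpha$ in the max defining $\xi$ is precisely what guarantees $4\xi\cos\theta+2\gamma\ge 0$. I then introduce a Levine auxiliary functional
\[
F(t):=\int_0^t N(s)\,ds+(T_*-t)\,N(0)+\beta(t+\tau)^2
\]
with free parameters $T_*,\beta,\tau>0$, and control $\int_0^t\|u_s\|_{L^2}^2\,ds$ by integrating the $E$-identity of Step~1. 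A space-time Cauchy--Schwarz estimate on $F'(t)$ then produces a differential inequality $F(t)F''(t)-(1+\sigma)(F'(t))^2\ge 0$ for some $\sigma>0$. Hence $F^{-\sigma}$ is a positive concave function, and adjusting the free parameters so that $(F^{-\sigma})'(0)<0$ forces it to vanish in finite time, whence $F\to\infty$ and therefore $\Tma<\infty$. The main obstacle is the quadratic-form calculation in the middle step: the two distinct requirements (non-positivity of $\GVar'$ in the monotonicity step, and non-negativity of the $N(t)$-coefficient in the concavity step) are exactly what force the two-term maximum in~\eqref{eHyp4}, and the closure condition~\eqref{eHyp2} together with the stronger restriction $|\theta|<\pi/4$ (compared with Theorem~\ref{ePos1}) are what make these compatible.
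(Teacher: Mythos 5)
There is a genuine gap in your middle step, and it is the heart of the matter. You claim that combining the identity \eqref{fGGLqbu} (equivalently your $dE/dt$ formula), the identity \eqref{fGGLs}, and the Cauchy--Schwarz bound $\|u_t\|_{L^2}^2\ge |\int \bar u u_t|^2/N$ yields $\GVar'\le 0$ because \eqref{eHyp2} makes the discriminant of a quadratic form in $(N',N)$ non-positive. If you actually carry out this elimination (write $I(u)=\frac{2\gamma N-N'}{2\cos\theta}$ and $\int\bar u u_t=\gamma N-e^{i\theta}I(u)$), the sharp inequality these ingredients give is
\begin{equation*}
\GVar'(t)\ \le\ \frac{1}{N}\Bigl[-\frac{1}{4\cos\theta}\,(N')^2+\Bigl(\xi+\frac{\gamma}{2\cos\theta}\Bigr)N'N\Bigr],
\end{equation*}
with \emph{no} $N^2$ term: the $\gamma^2$-contributions cancel identically ($2\sin^2\theta+\cos2\theta=1$). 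This quadratic form is negative semidefinite only when $\xi=-\gamma/(2\cos\theta)$, i.e.\ when the max in \eqref{eHyp4} equals $1/2$; but under \eqref{eHyp1}--\eqref{eHyp2} the second entry of the max is at least $\cos\theta>1/\sqrt2>1/2$, so the cross term never vanishes and the right-hand side is strictly positive whenever $0<N'<(4\xi\cos\theta+2\gamma)N$. In short, \eqref{eHyp2} cannot make your discriminant non-positive — enlarging $\xi$ makes it worse — and the monotonicity $\GVar'\le0$ does not follow from the stated ingredients (nor is it needed). The missing idea is to use the structural inequality $I(u)\le(\alpha+2)E(u)$ (see \eqref{fGLP8}) to absorb the term $-\gamma\cos(2\theta)I(u)$ into a multiple of the modified energy itself, which turns the estimate into a Gronwall-type inequality $\frac{d}{dt}\Est\le \rho(\alpha+2)\cos(2\theta)\,\Est+Af$ with $A\le0$ exactly when $\eta$ equals the second entry of the max (this is where \eqref{eHyp2} and $|\theta|<\pi/4$ enter); one then only concludes that the modified energy \emph{stays negative}, not that it decreases, and that weaker conclusion suffices.

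Two further points. First, your reading of the first entry $1/\alpha$ of the max is off: $4\xi\cos\theta+2\gamma\ge0$ already follows from the second entry under \eqref{eHyp1}--\eqref{eHyp2}; the actual role of $1/\alpha$ is to make the constant $\zeta=-f(0)^{-\frac{\alpha+2}{2}}\bigl[E(\DI)-\frac{\gamma}{\alpha\cos\theta}f(0)\bigr]$ positive, which drives the final ODE blow-up step. Second, your concluding Levine functional $F(t)=\int_0^tN+(T_*-t)N(0)+\beta(t+\tau)^2$ with the concavity inequality $FF''-(1+\sigma)(F')^2\ge0$ is only sketched; for $\theta\ne0$ the relation between $F''=N'+2\beta$ and $\int_0^t\|u_s\|_{L^2}^2$ involves the extra terms in \eqref{fGGLqbu} and \eqref{fGLP2}, and none of the required coefficient bookkeeping is verified. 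The paper instead derives a first-order inequality directly on $f=N$: from the persistent negativity of $\Est$ one shows $\frac{d}{dt}\bigl[-f^{-\frac{\alpha+2}{2}}e+\frac{\gamma}{\alpha\cos\theta}f^{-\frac\alpha2}\bigr]\ge0$, hence $f'\ge 2(\alpha+2)\cos\theta\,\zeta f^{\frac{\alpha+2}{2}}$, which forces $\Tma<\infty$. As it stands, both your monotonicity step and your concavity step would need to be replaced or substantially reworked along these lines.
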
 

Theorem~\ref{eMain1} calls for several comments. First, assume $\alpha $ and $\theta $ satisfy~\eqref{eHyp1}-\eqref{eHyp2} and let $\psi  \in H^1 (\R^N ) \cap \Cz$, $\psi \not = 0$. It follows that $\DI = \kappa  \psi $, with $k\in \C$, satisfies~\eqref{eHyp3} provided $ |\kappa | $ is sufficiently large. 

Next, assumption~\eqref{eHyp1} means that equation~\eqref{GL} is not (formally) close to the nonlinear Schr\"o\-din\-ger equation~\eqref{NLS}. Assuming~\eqref{eHyp1},  we see that~\eqref{eHyp2} is satisfied for a fixed $\theta $ if $\alpha $ is sufficiently large. Alternatively, \eqref{eHyp2} is satisfied for a fixed $\alpha >0$ if $ |\theta | $ is sufficiently small.

The assumptions of Theorem~\ref{eMain1} are probably not optimal, since letting $\gamma \downarrow 0$ yields the ``natural" condition $E ( \DI) <0$, but also the structural conditions~\eqref{eHyp1}-\eqref{eHyp2}. In particular, Theorem~\ref{eMain1} does not include the result of~\cite{CazenaveDW}. 
On the other hand, note that if $\gamma <0$, then there does not exist any map $F:H^1\cap C_0\to \R$ such that if $F( \DI ) <0$, then the solution of~\eqref{GL}  blows up in finite time for all $\theta \in (-\frac {\pi } {2}, \frac {\pi } {2})$. (At least if $\alpha <4/N$.) Indeed, given any $\alpha <4/N$ and any  $ \DI \in H^1 (\R^N ) \cap \Cz$, it follows from Remark~\ref{eCauchy2} that the solution of~\eqref{GL}  is global provided $\theta $ is sufficiently close to $\pm \frac {\pi } {2}$. 
This is in sharp contrast with the case $\gamma =0$, where negative energy yields finite-time blowup for every $ -\frac {\pi } {2} < \theta < \frac {\pi } {2}$. 

Note that finite-time blowup of certain solutions (in $L^\infty $, not necessarily of finite energy) of~\eqref{GGL} is proved in~\cite{MasmoudiZ} under the structural assumptions $-\frac {\pi } {2}< \theta , \phi < \frac {\pi } {2}$ and $\tan ^2 \phi + (\alpha +2) \tan \theta \tan \phi <\alpha +1$. 
For the equation~\eqref{GL}, the last assumption reduces to $\tan ^2 \theta <\frac {\alpha +1} {\alpha +3}$, i.e.,
\begin{equation} \label{fMZ} 
(\alpha +2) \cos ^2 \theta > \frac {\alpha +3} {2}.
\end{equation}  
(Note that $\tan ^2 \theta <1$ so that in particular $\theta $ satisfies~\eqref{eHyp1}.)
On the other hand, condition~\eqref{eHyp2} is equivalent to
\begin{equation} \label{fMZ2} 
(\alpha +2) \cos ^2 \theta \ge  \frac {\alpha } {2}+ 2 \cos \theta .
\end{equation} 
Conditions~\eqref{fMZ} and~\eqref{fMZ2} are  not comparable. In particular, condition~\eqref{fMZ} is stronger if $\theta $ is close to $\pm \frac {\pi } {4}$, whereas condition~\eqref{fMZ2} is stronger if $\theta $ is close to $0$. 

The rest of this paper is organized as follows. In Section~\ref{Cauchy}, we recall some simple properties of the Cauchy problem~\eqref{GL}. In Section~\ref{sPos} we prove Theorem~\ref{ePos1}, and in Section~\ref{sBUp} we prove Theorem~\ref{eMain1}.

\section{The Cauchy problem} \label{Cauchy} 
Assume~\eqref{fGGLuA}.
It is well known that the operator $e^{i\theta } \Delta $ with domain $H^2 (\R^N ) $
 generates an analytic semigroup
of contractions $(\Tsem _\theta (t) )_{ t\ge 0 }$ on $L^2 (\R^N ) $. 
Moreover,
\begin{equation} \label{fDispu} 
 \|\Tsem _\theta (t) \psi \| _{ L^r } \le  (\cos \theta )^{-\frac {N} {2} (1- \frac {1} {p}+ \frac {1} {r})}
 t^{-\frac {N} {2} (  \frac {1} {p}- \frac {1} {r})}  \|\psi \| _{ L^p },
\end{equation} 
for $1\le p\le r\le \infty $ and $\theta $ satisfying~\eqref{fGGLuA} and
$(\Tsem _\theta (t) )_{ t\ge 0 }$ is a bounded $C_0$ semigroup on $L^p (\R^N ) $ for $1\le p<\infty $ and on $\Cz$.
Moreover, equation~\eqref{GL} can be written in the equivalent integral form
\begin{equation} \label{IGL} 
u(t)= \Tsem_\theta (t) \DI +  \int _0^t \Tsem_\theta (t-s)  [e^{i\theta } |u(s)|^\alpha u(s)
+ \gamma u(s)]\,ds .
\end{equation} 
It is immediate by applying a contraction mapping argument to~\eqref{IGL}  that the Cauchy problem~\eqref{GL} is locally well  posed in $\Cz$. Moreover, it is easy to see using the estimates~\eqref{fDispu} that $\Cz \cap H^1 (\R^N ) $ is preserved under the action of~\eqref{GL}. 
More precisely, we have the following result.

\begin{prop} \label{eGGLzu} 
Suppose~\eqref{fGGLuA}, $\alpha >0$ and $\gamma \in \R$.  
Given any $\DI \in \Cz \cap H^1 (\R^N ) $, there exist $T>0$ and a unique function $u \in  C([0,T], \Cz
\cap H^1 (\R^N ) ) 
\cap C((0,T), H^2 (\R^N ) )\cap C^1((0, T), L^2 (\R^N ) ) $ which satisfies~\eqref{GL} for all $t\in (0,T)$
and such that $u(0)= \DI$. 
Moreover, $u$ can be extended to a maximal interval $[0, \Tma)$, and if $\Tma <\infty $, then $  \|u(t)\| _{ L^\infty  }\to \infty $ as $t \uparrow \Tma$.
\end{prop}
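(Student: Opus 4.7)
The plan is to set up a fixed-point argument on the Duhamel formula~\eqref{IGL} in the space $C_0\cap H^1(\R^N)$, then upgrade the regularity by exploiting the analyticity of the semigroup $(\T_\theta(t))_{t\ge0}$, and finally extend to a maximal interval with the usual continuation argument.

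First I would set, for $M> \|\DI\|_{C_0\cap H^1}$ and $T>0$ to be chosen, the complete metric space $X_{T,M}=\{u\in C([0,T],C_0\cap H^1(\R^N))\,:\,\sup_{[0,T]}\|u(t)\|_{C_0\cap H^1}\le M\}$ with the $C([0,T],C_0\cap L^2)$ distance (this weaker distance makes $X_{T,M}$ complete while avoiding the fact that $u\mapsto|u|^\alpha u$ need not be $C^1$ on $H^1$ when $\alpha<1$). Defining $\Phi(u)(t)$ as the right-hand side of~\eqref{IGL}, I would use the contractivity of $\T_\theta(t)$ on $C_0$ and on $L^2$, together with the pointwise bound $|\,|u|^\alpha u-|v|^\alpha v\,|\le C(\|u\|_\infty^\alpha+\|v\|_\infty^\alpha)|u-v|$, to show that $\Phi$ is a strict contraction on $X_{T,M}$ for $T$ small enough. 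The $H^1$ bound on $\Phi(u)(t)$ is obtained from the a.e.\ inequality $|\nabla(|u|^\alpha u)|\le(\alpha+1)|u|^\alpha|\nabla u|$, which yields $\|\nabla(|u|^\alpha u)\|_{L^2}\le (\alpha+1)\|u\|_\infty^\alpha\|\nabla u\|_{L^2}$, so that $\|\Phi(u)(t)\|_{H^1}\le\|\DI\|_{H^1}+CT(M^{\alpha+1}+|\gamma|M)$. This delivers a unique fixed point $u\in X_{T,M}$, hence local existence and uniqueness in $C([0,T],C_0\cap H^1)$.

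Next I would upgrade the regularity to $C((0,T),H^2)\cap C^1((0,T),L^2)$. Since $e^{i\theta}\Delta$ generates an analytic semigroup on $L^2$ for $|\theta|<\pi/2$, one has the smoothing estimates $\|\T_\theta(t)\psi\|_{H^2}\le C_\theta t^{-1}\|\psi\|_{L^2}$ and $\|\partial_t\T_\theta(t)\psi\|_{L^2}\le C_\theta t^{-1}\|\psi\|_{L^2}$ for $t>0$. Applied to~\eqref{IGL} with the nonlinear source already known to lie in $C([0,T],L^2)$, this gives $u(t)\in H^2$ for every $t>0$ and $u\in C^1((0,T),L^2)$, and shows that $u$ satisfies~\eqref{GL} pointwise on $(0,T)$.

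Finally, by iterating the local statement starting from $u(T)$ (which belongs to $C_0\cap H^1$), the solution is extended to a maximal interval $[0,\Tma)$. For the blowup alternative, suppose $\Tma<\infty$ and $\limsup_{t\uparrow\Tma}\|u(t)\|_{L^\infty}=:L<\infty$. Feeding this $L^\infty$ bound into the integral inequality for $\|u(t)\|_{H^1}$ derived above and applying Gronwall gives $\sup_{[0,\Tma)}\|u(t)\|_{C_0\cap H^1}<\infty$, so the local existence result can be reapplied with uniform lifespan at times close to $\Tma$, contradicting maximality.

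The main technical obstacle is the $H^1$ preservation when $\alpha$ is small: the map $u\mapsto|u|^\alpha u$ is not Fr\'echet differentiable $H^1\to H^1$, so uniqueness and the contraction must be established in a weaker ($L^2$ or $C_0$) distance while the $H^1$ bound is closed separately by using simultaneously the $L^\infty$ control afforded by $C_0$. Everything else is a routine application of the analytic semigroup machinery and the dispersive estimate~\eqref{fDispu}.
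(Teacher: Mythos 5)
Your overall route --- contraction mapping on the Duhamel formula~\eqref{IGL} in $\Cz\cap H^1(\R^N)$, regularity upgrade via analyticity, continuation and the $L^\infty$ blowup alternative --- is exactly the argument the paper has in mind (the paper only sketches it, invoking the estimates~\eqref{fDispu}). Two small inaccuracies in the fixed-point step are harmless: $\Tsem_\theta(t)$ is \emph{not} a contraction on $\Cz$ (its operator norm on $L^\infty$ is $(\cos\theta)^{-N/2}>1$ for $\theta\neq0$, by~\eqref{fDispu} with $p=r$), but boundedness suffices; and the set $X_{T,M}$ of $C([0,T],\Cz\cap H^1)$ functions is not complete for the weaker metric --- the standard fix is to take functions bounded in $H^1$ and continuous into $\Cz\cap L^2$, and to recover strong $H^1$-continuity of the fixed point afterwards from the integral equation.

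The step that fails as written is the regularity upgrade. You feed a source that is only known to lie in $C([0,T],L^2)$ into the smoothing bound $\|\Tsem_\theta(\tau)\psi\|_{H^2}\le C\tau^{-1}\|\psi\|_{L^2}$; under the Duhamel integral this produces the divergent factor $\int_0^t(t-s)^{-1}\,ds$, and indeed a mild solution with a merely continuous $L^2$-valued source need not take values in $H^2$ nor be differentiable. The repair uses information you already have: the source $g=e^{i\theta}|u|^\alpha u+\gamma u$ is bounded in $H^1$ (via $|\nabla(|u|^\alpha u)|\le(\alpha+1)|u|^\alpha|\nabla u|$ and the $\Cz$ bound), so the smoothing only costs $(t-s)^{-1/2}$ and $\int_0^t(t-s)^{-1/2}\|g(s)\|_{H^1}\,ds<\infty$ gives $u(t)\in H^2$ for $t>0$. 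For $u\in C^1((0,T),L^2)$ and the pointwise equation one needs in addition a H\"older-in-time estimate: from $\|(\Tsem_\theta(h)-I)\psi\|_{L^2}\le Ch^{1/2}\|\psi\|_{H^1}$ and the boundedness of the Duhamel term one gets that $u$, hence $g$, is $\tfrac12$-H\"older from $[0,T']$ into $L^2$, and then the standard analytic-semigroup theorem (H\"older source $\Rightarrow$ the mild solution is a classical solution) yields $u\in C((0,T),H^2)\cap C^1((0,T),L^2)$ satisfying~\eqref{GL}. With that correction the proof is complete and consistent with the paper's intended argument.
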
 

\begin{rem}  \label{eGGLzd} 
{\rm Let $\DI \in \Cz \cap H^1 (\R^N ) $ and  $u$  the corresponding solution of~\eqref{GL} defined on the maximal interval $[0, \Tma)$, and given by Proposition~$\ref{eGGLzu}$.
If, in addition, $\alpha <4/N$, then~\eqref{GL} is locally well posed in $L^2 (\R^N ) $ (see~\cite{Weissler}). It is not difficult to show using the estimates~\eqref{fDispu} that the maximal existence times in $H^1 (\R^N ) \cap \Cz$ and $L^2 (\R^N ) $ are the same;
 and so if $\Tma <\infty $, then $  \|u(t)\| _{ L^2  }\to \infty $ as $t \uparrow \Tma$.}
\end{rem} 

We collect below the energy identities that we use in the next sections.

\begin{prop} \label{eGGLu} 
Suppose~\eqref{fGGLuA}, $\alpha >0$ and $\gamma \in \R$. If $\DI \in \Cz \cap H^1 (\R^N ) $ and and $u$ is the corresponding  solution of~\eqref{GL}  defined on the maximal interval $[0, \Tma)$, then the following properties hold. 

\begin{enumerate} [{\rm (i)}]

\item \label{eGGLu:d}
Set 
\begin{equation}  \label{fGGLc} 
I(w)=  \int _{\R^N }  |\nabla w|^2 - \int _{\R^N }  |w|^{\alpha +2},
\end{equation} 
for $w\in \Cz \cap H^1  (\R^N ) $. It follows that
\begin{equation} \label{fGGLsbumu}
 \int _{\R^N }  \overline{u}  u_t =  \gamma \int  _{ \R^N  }  |u|^2 - e^{i\theta } I(u).
\end{equation} 
In particular,
\begin{gather} 
 | I(u)| = \Bigl| \int _{\R^N }   \overline{u} u_t -  \gamma \int  _{ \R^N  }  |u|^2  \Bigr| , \label{fGGLsbu} \\
|I(u(t))|^2   =  \Bigl| \int  _{ \R^N  } \overline{u}u_t  \Bigr|^2 +\gamma ^2  \Bigl( \int  _{ \R^N  }  |u|^2 \Bigr)^2 - \gamma  \Bigl(  \int  _{ \R^N  }  |u|^2 \Bigr) \frac {d} {dt} \int  _{ \R^N  }  |u|^2, \label{fGGLsbu1}
\end{gather} 
and 
\begin{equation} \label{fGGLs}
\frac {d} {dt} \int _{\R^N }  |u|^2= 2\gamma \int  _{ \R^N  }  |u|^2  - 2\cos \theta  \ I(u),
\end{equation} 
for all $0< t< \Tma$. 

\item \label{eGGLu:u}
If $E$ is defined by~\eqref{fEn1}, then
\begin{equation} \label{fGGLqbu}
\frac {d} {dt} E(u(t)) = 
 - \cos \theta \int  _{ \R^N  } |u_t|^2 + \gamma ^2 \cos \theta \int  _{ \R^N  } |u|^2 - \gamma \cos (2 \theta ) I(u),
\end{equation} 
and
\begin{equation} \label{fGGLq}
\frac {d} {dt}  \Bigl[ E(u(t)) - \frac {\gamma } {2} \cos \theta \int  _{ \R^N  } |u|^2 \Bigr] = 
 - \cos \theta \int  _{ \R^N  } |u_t|^2 + \gamma \sin ^2 \theta \ I(u),
\end{equation} 
for all $0< t< \Tma$. 

\end{enumerate} 

\end{prop}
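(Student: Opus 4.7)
My plan is to treat parts (i) and (ii) as straightforward energy calculations: part (i) arises from multiplying~\eqref{GL} by $\overline{u}$ and integrating, while part (ii) arises from differentiating $E(u(t))$ and using the equation again. The regularity provided by Proposition~\ref{eGGLzu} (namely $u\in C([0,T],H^1\cap C_0)\cap C((0,T),H^2)\cap C^1((0,T),L^2)$) is enough to justify differentiating the integrals and integrating by parts on $(0,\Tma)$.

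For part (i), I first multiply~\eqref{GL} by $\overline{u}$ and integrate over $\R^N $. Integration by parts gives $\int \overline{u}\,\Delta u = -\int  |\nabla u|^2 $, so the right-hand side becomes $e^{i\theta } [-\int  |\nabla u|^2 +\int  |u|^{\alpha +2} ]+\gamma \int  |u|^2  = -e^{i\theta } I(u)+\gamma \int  |u|^2 $. This yields~\eqref{fGGLsbumu} directly. From~\eqref{fGGLsbumu}, since $I(u)$ is real, I obtain $ |I(u)| = | \int \overline{u}u_t -\gamma \int  |u|^2 |$ by taking moduli (using $|e^{i\theta}|=1$), which is~\eqref{fGGLsbu}. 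Expanding $|a-b|^2 = |a|^2 + b^2 - 2b\,\re a$ with $a=\int \overline{u}u_t $ and $b=\gamma \int  |u|^2 $, and using $\re \int \overline{u}u_t =\frac {1} {2}\frac {d} {dt}\int  |u|^2 $, produces~\eqref{fGGLsbu1}. Finally, taking the real part of~\eqref{fGGLsbumu} and using the same identity for $\re \int  \overline{u}u_t$ gives~\eqref{fGGLs}.

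For part (ii), I differentiate $E(u(t))$ under the integral, obtaining $\frac {d} {dt}E(u)=\re \int (\nabla \overline{u}\cdot \nabla u_t -  |u|^\alpha \overline{u} u_t )$. Integration by parts in the first term gives $\frac {d} {dt}E(u)=-\re \int \overline{(\Delta u+ |u|^\alpha u)}\,u_t $. From~\eqref{GL}, $\Delta u+ |u|^\alpha u = e^{-i\theta }(u_t -\gamma u)$, so conjugating and substituting yields $\frac {d} {dt}E(u) = -\cos \theta \int  |u_t|^2 +\gamma \,\re (e^{i\theta }\int \overline{u}u_t )$. Plugging in~\eqref{fGGLsbumu} gives $\re(e^{i\theta }\int \overline{u}u_t ) = \gamma \cos \theta \int  |u|^2 -\cos (2\theta )I(u)$, which is~\eqref{fGGLqbu}. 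For~\eqref{fGGLq}, I subtract $\frac {\gamma } {2}\cos \theta \cdot \frac {d} {dt}\int  |u|^2 $ from~\eqref{fGGLqbu}, using~\eqref{fGGLs} to evaluate this derivative. The $\gamma ^2 \cos \theta \int  |u|^2 $ terms cancel, and the $I(u)$ coefficient becomes $-\gamma \cos (2\theta )+\gamma \cos ^2 \theta = \gamma \sin ^2 \theta $, as needed.

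There is no serious obstacle; the one point requiring a little care is the justification of differentiation under the integral and the integration by parts. Since $u\in C((0,\Tma ),H^2 (\R^N ) )\cap C^1 ((0,\Tma ),L^2 (\R^N ) )$ and $u\in L^\infty  $ on compact subintervals, the integrands $\nabla \overline{u}\cdot \nabla u_t $ and $|u|^\alpha  \overline{u}u_t $ are in $L^1 _{\mathrm{loc}}((0,\Tma ),L^1 (\R^N ) )$, and the boundary term in the integration by parts vanishes by $H^2 $-decay. Hence all the formal identities above are rigorous on the open interval $(0,\Tma )$, which is all that is needed in Sections~\ref{sPos} and~\ref{sBUp}.
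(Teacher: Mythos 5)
Your proposal is correct and follows essentially the same route as the paper: multiply~\eqref{GL} by $\overline{u}$ for part (i), and compute $\frac{d}{dt}E(u)$ via the equation (equivalent to the paper's multiplication by $e^{-i\theta}\overline{u}_t$) together with~\eqref{fGGLsbumu} for part (ii). Your derivation of~\eqref{fGGLq} by subtracting $\frac{\gamma}{2}\cos\theta$ times~\eqref{fGGLs} from~\eqref{fGGLqbu} is an algebraically equivalent rearrangement of the paper's use of the imaginary part of~\eqref{fGGLsbumu}, and your remarks on the regularity needed to justify the formal computations are a welcome, if minor, addition.
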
 

\begin{proof} 
Multiplying equation~\eqref{GL}  by  $ \overline{u} $ and integrating by parts on $\R^N  $ yields~\eqref{fGGLsbumu}, and identities~\eqref{fGGLsbu}, \eqref{fGGLsbu1} and~\eqref{fGGLs} are immediate consequences. 
Multiplying equation~\eqref{GL}  by  $e^{-i\theta } \overline{u}_t $, integrating by parts on $\R^N  $ and taking the real part, we obtain
\begin{equation} \label{fGLP1} 
\begin{split} 
\frac {d} {dt} E(u(t)) & = - \cos \theta \int  _{ \R^N  } |u_t|^2 + \gamma \Re  \Bigl( e^{ i\theta } \int  _{ \R^N  }   \overline{u} u_t  \Bigr) \\ & 
= - \cos \theta \int  _{ \R^N  } |u_t|^2 + \gamma   \Bigl( \cos \theta \ \Re \int  _{ \R^N  }   \overline{u} u_t  - \sin \theta \ \Im \int  _{ \R^N  }   \overline{u} u_t \Bigr).
\end{split} 
\end{equation} 
Furthermore, it follows from~\eqref{fGGLsbumu} that 
\begin{equation} \label{fGLP3}
 \Re  \Bigl( e^{ i\theta } \int  _{ \R^N  }   \overline{u} u_t  \Bigr) =    \gamma\cos \theta \int  _{ \R^N  }  |u|^2 - \cos (2\theta ) I(u) .
\end{equation} 
Identity~\eqref{fGGLqbu} follows from the first identity in~\eqref{fGLP1}, and~\eqref{fGLP3}.
On the other hand, taking the imaginary part of~\eqref{fGGLsbumu}, we obtain
\begin{equation} \label{fGLP2}
 \Im \int  _{ \R^N  }   \overline{u} u_t = - \sin \theta \ I(u). 
\end{equation} 
Identity~\eqref{fGGLq} now follows from the second identity in~\eqref{fGLP1}, and~\eqref{fGLP2}.
\end{proof} 

\begin{rem} 
Note that
\begin{equation} \label{fGLP8} 
I(w)= (\alpha +2) E(w) - \frac {\alpha } {2}\int  _{ \R^N  } |\nabla u|^2 \le (\alpha +2) E(w).
\end{equation} 
\end{rem} 

We conclude this section with a global existence property for sufficiently small initial values in the case $\gamma <0$. 

\begin{prop} \label{eCauchy1} 
There exists a constant $\Cgn>0$ with the following property. 
Given $\gamma <0$, $0< \alpha <\frac {4} {N}$, and $\Cz \cap H^1 (\R^N ) $, let $u\in C([0, \Tma),  H^1 (\R^N ) \cap \Cz )$ be the corresponding, maximal solution of~\eqref{GL}. 
If 
\begin{equation}  \label{fGlob2:7}
 \| \DI \| _{ L^2 } \le  \Biggl[  \frac {4 |\gamma |} {(4-N\alpha )  (  \frac {N\alpha } {4} 
 \Cgn ) ^{  \frac {N\alpha } {4-N\alpha } } \Cgn \cos \theta   } \Biggr]^{\frac  {4- N\alpha }{4\alpha }},
\end{equation} 
then $u$ is global, i.e., $\Tma= \infty $.
\end{prop}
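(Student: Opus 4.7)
The natural approach is to derive a differential inequality for $f(t)=\|u(t)\|_{L^2}^2$ and show that the smallness hypothesis \eqref{fGlob2:7} forces $f$ to be nonincreasing on $[0,\Tma)$; combined with Remark~\ref{eGGLzd}, this yields $\Tma=\infty$.

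The starting point is identity~\eqref{fGGLs} from Proposition~\ref{eGGLu}, which I rewrite as
\begin{equation*}
f'(t) = 2\gamma f(t) - 2\cos\theta\,\|\nabla u(t)\|_{L^2}^2 + 2\cos\theta\,\|u(t)\|_{L^{\alpha+2}}^{\alpha+2}.
\end{equation*}
Since $0<\alpha<4/N$, the Gagliardo--Nirenberg inequality gives some constant $\Cgn=\Cgn(N,\alpha)>0$ (which is the constant that appears in the statement) such that
\begin{equation*}
\|u\|_{L^{\alpha+2}}^{\alpha+2} \le \Cgn\,\|\nabla u\|_{L^2}^{N\alpha/2}\,\|u\|_{L^2}^{\alpha+2-N\alpha/2}.
\end{equation*}
I would then apply Young's inequality with the conjugate exponents $p=4/(N\alpha)$ and $q=4/(4-N\alpha)$, choosing the tuning parameter so that the resulting gradient term exactly cancels the dissipation $-2\cos\theta\,\|\nabla u\|_{L^2}^2$. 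A direct computation (with $p^{q/p}=p^{q-1}$ and $K^{1+q/p}=K^q$) yields
\begin{equation*}
f'(t) \le 2\gamma f(t) + \frac{(4-N\alpha)\cos\theta}{2}\Bigl(\tfrac{N\alpha}{4}\Cgn\Bigr)^{\!N\alpha/(4-N\alpha)}\!\Cgn\,f(t)^{1+2\alpha/(4-N\alpha)}.
\end{equation*}

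Now set $g(x)=2\gamma x + C\,x^{1+2\alpha/(4-N\alpha)}$ with $C$ the explicit coefficient above; note $g(0)=0$ and $g$ is the sum of a linear function and a strictly convex function, so $g$ is convex on $[0,\infty)$. The hypothesis~\eqref{fGlob2:7} is precisely the statement that $g(f(0))\le 0$, which by convexity forces $g(x)\le 0$ on the whole interval $[0,f(0)]$. Consequently, whenever $f(t)\le f(0)$ we have $f'(t)\le g(f(t))\le 0$, and a standard continuation argument shows that $f(t)\le f(0)$ on the entire interval $[0,\Tma)$. In particular $\|u(t)\|_{L^2}$ stays bounded, so by Remark~\ref{eGGLzd} (which uses $\alpha<4/N$) we conclude $\Tma=\infty$.

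The only mildly delicate step is the bookkeeping in the Young inequality so that the resulting constant matches the expression in~\eqref{fGlob2:7}; otherwise the argument is a direct ODE comparison. There is no obstacle from the sign of $\gamma$: on the contrary, $\gamma<0$ is exactly what makes $g$ negative near $0$ and hence what permits the smallness-to-decrease bootstrap.
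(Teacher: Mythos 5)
Your proposal is correct and follows essentially the same route as the paper: the identity~\eqref{fGGLs} combined with Gagliardo--Nirenberg and Young's inequality (with the parameter tuned to cancel the gradient term) gives exactly the Bernoulli-type inequality $f'\le 2\gamma f+Cf^{1+2\alpha/(4-N\alpha)}$, and the smallness condition~\eqref{fGlob2:7} is precisely the threshold that keeps $f$ bounded, after which Remark~\ref{eGGLzd} yields $\Tma=\infty$. The only (immaterial) difference is the last step: you argue via convexity of $g$ and an invariant-region/comparison argument, whereas the paper integrates the differential inequality explicitly to get a pointwise bound on $f$.
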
 

\begin{proof} 
For $0\le t<\Tma$, set
\begin{equation} \label{fGlob2:1} 
f(t)= \int  _{ \R^N  } |u(t,x)|^2 dx.
\end{equation} 
Recall that (by Sobolev's or Gagliardo-Nirenberg's inequality) that there exists a constant $\Cgn$ such that 
\begin{equation} \label{fGlob2:3}
 \| w \| _{ L^{\alpha +2} }^{\alpha +2} \le \Cgn \|\nabla  w \| _{ L^2 }^{\frac {N\alpha } {2}}  \| w \| _{ L^2 } ^{ \frac {4- (N-2) \alpha } {2}},
\end{equation} 
for all $0\le \alpha \le \frac {4} {N}$ and all $w\in H^1 (\R^N ) $.
Applying the elementary inequality
$xy \le \frac {\varepsilon ^p x^p } {p}+ \frac {y^{p'}} {p' \varepsilon ^{p'}} $ 
with $p= \frac {4} {N\alpha }$, we deduce from~\eqref{fGlob2:3} that 
\begin{equation}  \label{fGlob2:5}
 \| w \| _{ L^{\alpha +2} }^{\alpha +2} \le \Cgn  
  \Bigl( \frac {N\alpha } {4} \varepsilon ^{\frac {4} {N\alpha }}  \|\nabla w \| _{ L^2 }^2 + \frac {4-N\alpha } {4} \varepsilon ^{ - \frac {4} {4-N\alpha } }  \| w \| _{ L^{2} }^{\frac {2 [ 4-(N-2)\alpha ]} {4-N\alpha }} \Bigr).
\end{equation}
It now follows from~\eqref{fGlob2:1},  \eqref{fGGLs} and~\eqref{fGlob2:5} that
\begin{equation*} 
\frac {df} {dt}  \le 2\gamma f + \cos \theta \ \Cgn  \varepsilon ^{ - \frac {4 } {4-N\alpha} } \frac {4-N\alpha } {2}  f^{\frac { 4-(N-2)\alpha } {4-N\alpha }} 
- 2\cos \theta  \Bigl[ 1 -   \varepsilon ^{\frac {4 } {N\alpha}} \frac {N\alpha } {4} \Cgn  \Bigr]  \|\nabla u\| _{ L^2 }^2.
\end{equation*} 
Letting
$\varepsilon =  (  \frac {N\alpha } {4}\Cgn )^{- \frac {N\alpha} {4 }}$,
 we deduce that
\begin{equation} \label{fGlob2:5b}
\frac {df} {dt} \le 2\gamma f + \cos \theta \ \Cgn  \Bigl(  \frac {N\alpha } {4} \Cgn \Bigr) ^{  \frac {N\alpha } {4-N\alpha } } \frac {4-N\alpha } {2}   f^{\frac { 4-(N-2)\alpha } {4-N\alpha }}.
\end{equation} 
This is an inequality of the form
$f'+ af \le b f^{1+\nu }$. If 
$ f (0)^{\nu } \le \frac {a} {b}$,
then this implies $f (t) \le  e^{-a t} ( f (0)^{-\nu } -  \frac {b} { a} )^{-\frac {1} {\nu }} $.
Therefore, it follows from~\eqref{fGlob2:5b}  and~\eqref{fGlob2:7} that $\sup _{ 0\le t<\Tma } \| u(t)\| _{ L^2 }< \infty $. Applying Remark~\ref{eGGLzd}, we conclude that $\Tma = \infty $.
\end{proof} 

\begin{rem} \label{eCauchy2} 
For a fixed $\alpha $, condition~\eqref{fGlob2:7} becomes better and better as $\theta \to \pm \frac {\pi } {2}$. (This is not too surprising. Indeed, for the limiting nonlinear Schr\"o\-din\-ger equation, global existence holds for every initial value.) 
More precisely, the right-hand side of~\eqref{fGlob2:7} goes to $\infty $ as $\theta \to \pm \frac {\pi } {2}$. 
In particular, if we fix $\alpha <\frac {4} {N}$, $\gamma <0$ and an initial value $ \DI $, then the solution of~\eqref{GL}  is global if $\theta $ is sufficiently close to $\pm \frac {\pi } {2}$.
\end{rem} 

\section{Proof of Theorem~$\ref{ePos1}$} \label{sPos} 
We follow the argument of~\cite{CazenaveDW} after an appropriate change of variables.
Set 
\begin{equation}  \label{fPos1} 
v(t)= e^{- \gamma t} u(t), 
\end{equation} 
for $0\le t<\Tma$. (Note that $v_t= e^{i\theta } [\Delta v+ e^{\alpha \gamma t} |v|^\alpha v] $.)
Set
\begin{gather}  
 \widetilde{f}(t) =\int  _{ \R^N  } |v|^2 = e^{-2\gamma t}\int  _{ \R^N  } |u|^2,  \label{fPos3} \\
 \widetilde{\jmath} (t)=  \int _{\R^N }  |\nabla v|^2 - e^{\alpha \gamma t} \int _{\R^N }  |v|^{\alpha +2} 
 = e^{-2\gamma t} I(u),
 \label{fPos4} 
\end{gather} 
and 
\begin{equation}  \label{fPos5} 
 \widetilde{e} (t )= \frac {1} {2}\int _{\R^N }  |\nabla v|^2 -\frac {e^{\alpha \gamma t}} {\alpha +2}\int _{\R^N }  |v|^{\alpha +2} = e^{-2\gamma t} E(u).
\end{equation} 
It follows from~\eqref{fGGLsbumu}, \eqref{fPos1} and~\eqref{fPos4}  that
\begin{equation} \label{fPos6b} 
\int  _{ \R^N  }  \overline{v}v_t= -e^{i\theta }   \widetilde{\jmath} (t),
\end{equation} 
so that by~\eqref{fPos3}
\begin{equation} \label{fPos6}
\frac {d \widetilde{f} } {dt}  = 2 \Re \int _{\R^N }  \overline{v}  v_t =   - 2 \cos \theta \   \widetilde{\jmath } (t).
\end{equation} 
Moreover, it follows from~\eqref{fPos5}, \eqref{fGGLqbu},  \eqref{fPos1},  \eqref{fPos6} and~\eqref{fPos4} that
\begin{equation} \label{fPos7}
\frac {d \widetilde{e} } {dt}  =   - \cos \theta   \int  _{ \R^N  } |v_t|^2  - e^{\alpha \gamma t} \frac {\alpha \gamma } {\alpha +2} \int  _{ \R^N  } |v|^{\alpha +2} \le   - \cos \theta   \int  _{ \R^N  } |v_t|^2.
\end{equation} 
Since $ \widetilde{e} (0)= E( \DI) <0$, we deduce from~\eqref{fPos7} that 
\begin{equation} \label{fPos7b}
 \widetilde{e} (t)<0,
\end{equation} 
for all $0\le t<\Tma$. 
It follows from~\eqref{fPos7}, Cauchy-Schwarz's inequality, \eqref{fPos6b} and~\eqref{fPos6} that
\begin{equation}  \label{fPos8}
\begin{split} 
-  \widetilde{f}  \frac {d \widetilde{e} } {dt} & \ge \cos \theta   \int  |v|^2 \int  |v_t|^2 \ge \cos \theta   \Bigl| \int  \overline{v}v_t  \Bigr|^2 \\ & = \cos \theta \    \widetilde{\jmath }  ^2  = \frac {1} {2}   (-  \widetilde{\jmath }   ) \frac {d \widetilde{f} } {dt} .
\end{split} 
\end{equation} 
On the other hand, note that
\begin{equation}  \label{fPos9}
 \widetilde{\jmath }  = (\alpha +2)  \widetilde{e}  -\frac {\alpha } {2} \int_{ \R^N  } |\nabla v|^2 \le  (\alpha +2)  \widetilde{e} .
\end{equation} 
It follows from~\eqref{fPos7b} and~\eqref{fPos9}  that $ \widetilde{\jmath }  <0$, so that by~\eqref{fPos7}, $\frac {d \widetilde{f} } {dt} >0$; and so, we deduce from~\eqref{fPos8}-\eqref{fPos9} that
\begin{equation} \label{fPos10}
-  \widetilde{f}  \frac {d \widetilde{e} } {dt}\ge -\frac {\alpha +2} {2}  \widetilde{e}   \frac {d  \widetilde{f} } {dt}.
\end{equation} 
Therefore,
\begin{equation}  \label{fPos11}
\frac {d} {dt} [-  \widetilde{e}  \widetilde{f}^{-\frac {\alpha +2} {2}}]\ge 0,
\end{equation} 
so that
\begin{equation}  \label{fPos12}
- \widetilde{e} (t) \ge [- \widetilde{e} (0)] \widetilde{f}(0)^{-\frac {\alpha +2} {2}} \widetilde{f}(t)^{\frac {\alpha +2} {2}} = (- E  (\DI) ) \|\DI\| _{ L^2 } ^{-(\alpha +2)}  \widetilde{f} (t)^{\frac {\alpha +2} {2}}.
\end{equation} 
It now follows from~\eqref{fPos6}, \eqref{fPos9} and~\eqref{fPos12} that
\begin{equation}  \label{fPos13}
\begin{split} 
\frac {d \widetilde{f}} {dt} & = - 2 \cos \theta \   \widetilde{\jmath }   \ge -2(\alpha +2)   \cos \theta \  \widetilde{e} \\ &  \ge 2(\alpha +2)  \cos \theta \   (- E (\DI) ) \|\DI\| _{ L^2 } ^{-( \alpha +2) } \widetilde{f}^{\frac {\alpha +2} {2}},
\end{split} 
\end{equation} 
which implies
\begin{equation}  \label{fPos14}
\frac {d} {dt}  \{\alpha  (\alpha +2)  \cos \theta \ (- E (\DI) ) \|\DI\| _{ L^2 } ^{-( \alpha +2) }  t +   \widetilde{f}^{- \frac {\alpha } {2}} \} \le 0.
\end{equation} 
Since~\eqref{fPos14} holds for all $0\le t<\Tma$, we deduce that
\begin{equation} \label{fPos15}
\Tma \le \frac { \| \DI \| _{ L^2 }^2} {\alpha (\alpha +2)  \cos \theta \  (- E(\DI )) } <\infty .
\end{equation} 
This completes the proof.

\section{Proof of Theorem~$\ref{eMain1}$}\label{sBUp} 

Consider $\DI$ as in the statement and $u$ the corresponding solution of~\eqref{GL} defined on the maximal interval $[0, \Tma)$.  
We first show that a certain energy of $u$ remains negative as long as $u$ exists. Then, we use this property in order to derive a differential inequality which shows that $u$ cannot be global. 

It is convenient to set
\begin{equation} \label{fBUp1z:1}
\rho = - \gamma >0,
\end{equation} 
and 
\begin{equation}  \label{fBUp1:8}
\eta =  \rho  \frac {(\alpha +2) \cos (2\theta )+ 2 (1-\cos \theta )} { 2 \cos \theta } \ge \rho >0 ,
\end{equation} 
where the first inequality follows from~\eqref{eHyp2}. 
Moreover, let 
\begin{gather*} 
e(t)= E(u(t)), \\
j(t) = I(u(t)), \\
f(t)=  \|u(t)\| _{ L^2 }^2,
\end{gather*} 
where $E$ and $I$ are defined by~\eqref{fEn1} and~\eqref{fGGLc}, respectively, and  
\begin{equation} \label{fBUp1:2} 
\Est (t)= e(t) + \eta  f(t),
\end{equation} 
for $0\le t<\Tma $. 
We first claim that
\begin{equation} \label{fBUp1:12}
e(t) \le \Est (t)  < 0,
\end{equation}
for all $0\le  t <\Tma$.
Indeed, the first inequality in~\eqref{fBUp1:12} follows from~\eqref{fBUp1:2}. Moreover,   
 since $\cos (2\theta )> 0$ by~\eqref{eHyp1}, it follows from~\eqref{fGLP8}  that 
\begin{equation*} 
 \rho  \cos (2 \theta ) j(t) \le  \rho  (\alpha +2) \cos (2 \theta ) e(t) ;
\end{equation*} 
and so, we deduce from~\eqref{fGGLqbu} that
\begin{equation*}
\frac {de} {dt}   \le  
 - \cos \theta \ \| u_t   \| _{ L^2 }^2 + \rho ^2 \cos \theta \ f  + \rho  (\alpha +2) \cos (2 \theta ) e  , 
\end{equation*} 
i.e., using~\eqref{fBUp1:2},
\begin{equation} \label{fBUp1:3}
\frac {de} {dt} \le  
 - \cos \theta \ \| u_t (t) \| _{ L^2 }^2 + \rho  (\alpha +2) \cos (2 \theta ) \Est  
 + [\rho ^2 \cos \theta - \eta \rho (\alpha +2) \cos (2\theta )]  f .
\end{equation} 
Note also that by~\eqref{fGGLs} and~\eqref{fGGLsbu} 
\begin{equation} \label{fBUp1:4}
\frac {df} {dt} \le  -2 \rho f + 2 \cos \theta \ | j |  
\le  -2 \rho (1- \cos \theta ) f  + 2 \cos \theta  \Bigl| \int  _{ \R^N  } \overline{u} u_t  \Bigr| .
\end{equation} 
Since 
\begin{equation*} 
2 \Bigl| \int  _{ \R^N  } \overline{u} u_t  \Bigr| \le 2 \|u_t \| _{ L^2  } f  ^{\frac {1} {2}}
\le \frac {1} {\eta}  \| u_t \| _{ L^2 }^2 + \eta f ,
\end{equation*} 
we deduce from~\eqref{fBUp1:3} and~\eqref{fBUp1:4}  that
\begin{equation}  \label{fBUp1:5}
\frac {d \Est} {dt}   \le \rho  (\alpha +2) \cos (2 \theta ) \Est    + A   f ,
\end{equation} 
where $A =  ( \rho ^2 +\eta ^2) \cos \theta - \eta \rho [ (\alpha +2) \cos (2\theta )-  2  (1- \cos \theta ) ]   $. Note that by~\eqref{fBUp1:8} 
\begin{equation*} 
A = ( \rho ^2 +\eta ^2) \cos \theta  - 2 \eta^2 \cos \theta = ( \rho ^2 -\eta ^2) \cos \theta \le 0;
\end{equation*} 
and so  we deduce from~\eqref{fBUp1:5}  that
\begin{equation*}
\frac {d \Est} {dt}   \le   \rho  (\alpha +2) \cos (2 \theta ) \Est  .
\end{equation*} 
Therefore, $ \Est (t)  \le   e^{t \rho  (\alpha +2) \cos (2 \theta )} \Est (0)<0$, which proves the claim~\eqref{fBUp1:12}.

We now use the energy inequality~\eqref{fBUp1:12} to obtain a differential inequality on $f$. 
Observe that by~\eqref{fGGLs}, 
\begin{equation*} 
- \frac {\alpha +2} {2} e   \Bigl[  \frac {df} {dt}+ 2 \rho  f  \Bigr]
 =(\alpha +2)  \cos \theta \ e  j  .
\end{equation*} 
Since $0 >(\alpha +2) e  \ge j $ by~\eqref{fBUp1:12}  and~\eqref{fGLP8}, we deduce that  
\begin{equation} \label{fTR1} 
- \frac {\alpha +2} {2} e   \Bigl[  \frac {df} {dt}+ 2 \rho  f  \Bigr] \le  \cos \theta  \ j ^2  .
\end{equation} 
Note that by~\eqref{fGGLsbu1},
\begin{equation*} 
j ^2  =  \Bigl| \int  _{ \R^N  } \overline{u}u_t  \Bigr|^2 +\rho  ^2 f^2 + \rho  f \frac {df} {dt}  \le f  \| u_t \| _{ L^2 }^2 +\rho ^2 f^2 +\rho  f \frac {df} {dt}.
\end{equation*}  
Therefore, it follows from~\eqref{fTR1} that
\begin{equation}  \label{fTR2} 
\cos \theta  f  \| u_t \| _{ L^2 }^2 \ge - \frac {\alpha +2} {2} e(t) \Bigl[  \frac {df} {dt}+ 2 \rho  f  \Bigr] -\rho  ^2 \cos \theta  f^2 - \rho  \cos \theta  f \frac {df} {dt} .
\end{equation} 
On the other hand, multiplying~\eqref{fGGLq} by $f$ we obtain
\begin{equation*} 
- f  \frac {de} {dt}  - \frac {\rho  } {2} \cos \theta \ f \frac {df} {dt} -\rho  \sin ^2 \theta \ fj =   \cos \theta \ \| u_t \| _{ L^2 }^2 f.
\end{equation*} 
Applying~\eqref{fTR2}, we deduce that 
\begin{multline*} 
- f  \frac {de} {dt}  - \frac {\rho  } {2} \cos \theta \ f \frac {df} {dt} -\rho  \sin ^2 \theta \ fj  \\
\ge - \frac {\alpha +2} {2} e \Bigl[  \frac {df} {dt}+ 2 \rho  f  \Bigr] -\rho  ^2 \cos \theta \ f^2  -\rho  \cos \theta  \ f \frac {df} {dt},
\end{multline*} 
i.e.,
\begin{equation}  \label{fTR3} 
- f \frac {de} {dt} + \frac {\alpha +2} {2} e \frac {df} {dt} + \frac {\rho  } {2} \cos \theta \ f \frac {df} {dt} 
\ge \rho  \sin ^2 \theta \ f j -\rho  (\alpha +2) e f  -\rho  ^2 \cos \theta \ f^2  .
\end{equation} 
Since
\begin{equation*}
 j   = - \frac {\rho  } {\cos \theta } f  - \frac {1} {2\cos \theta } \frac {df} {dt} ,
\end{equation*} 
by~\eqref{fGGLs},
it follows from~\eqref{fTR3} that
\begin{multline*} 
- f \frac {de} {dt} + \frac {\alpha +2} {2} e \frac {df} {dt} + \frac {\rho  } {2} \cos \theta \ f \frac {df} {dt}  \\
\ge -  \frac {\rho  ^2  \sin ^2 \theta} {\cos \theta } f^2  - \frac { \rho  \sin ^2 \theta} {2\cos \theta } f  \frac {df} {dt}  
-\rho  (\alpha +2) e f -\rho  ^2 \cos \theta  f^2    \\ 
= - \frac {\rho  ^2 } {\cos \theta }f^2  - \frac { \rho  \sin ^2 \theta} {2\cos \theta } f  \frac {df} {dt} -\rho  (\alpha +2) e  f ;
\end{multline*} 
and so,
\begin{equation}  \label{fTR4} 
- f \frac {de} {dt}  + \frac {\alpha +2} {2} e \frac {df} {dt} +  \frac { \rho   } { 2 \cos \theta} f \frac {df} {dt}  
\ge - \frac {\rho  ^2 } {\cos \theta }f^2  -\rho  (\alpha +2) e f .
\end{equation} 
Since $e = \Est - \eta f \le - \eta f$
by~\eqref{fBUp1:2} and~\eqref{fBUp1:12}, and $\eta \ge \rho $ by~\eqref{fBUp1:8}, we see that
\begin{equation*} 
- \rho  (\alpha +2) e f    \ge  \eta \rho   (\alpha +2)    f^2 \ge \rho ^2 (\alpha +2)    f^2 .
\end{equation*} 
Therefore, we deduce from~\eqref{fTR4} that
\begin{equation}  \label{fTR4b1} 
- f \frac {de} {dt}  + \frac {\alpha +2} {2} e \frac {df} {dt} +  \frac { \rho  } { 2 \cos \theta} f \frac {df} {dt}  
\ge  \Bigl[ \alpha +2 - \frac {1} {\cos \theta }  \Bigr] \rho ^2 f^2.
\end{equation} 
Furthermore, it follows from~\eqref{fMZ2} that
\begin{equation*} 
(\alpha +2) \cos \theta \ge 2 ,
\end{equation*}
and we deduce from~\eqref{fTR4b1}  that 
\begin{equation}  \label{fTR4b2} 
- f \frac {de} {dt}  + \frac {\alpha +2} {2} e \frac {df} {dt} +  \frac { \rho   } { 2 \cos \theta} f \frac {df} {dt}  
\ge \frac {1} {\cos \theta }\rho ^2 f^2 \ge  0  .
\end{equation} 
Multiplying~\eqref{fTR4b2} by $f^{ - { \frac {\alpha +4} {2} }}>0$ we obtain
\begin{equation} \label{fTR5b1} 
\frac {d} {dt} \Bigl[ - f^{-\frac {\alpha +2} {2}} e -   \frac {\rho   } {\alpha \cos \theta} f^{-\frac {\alpha } {2}} \Bigr] \ge 0. 
\end{equation} 
Set
\begin{equation} \label{fTR6} 
\zeta = - f (0) ^{-\frac {\alpha +2} {2}} e (0) -   \frac {\rho  } {\alpha \cos \theta} f (0) ^{-\frac {\alpha } {2}},
\end{equation} 
and note that by~\eqref{eHyp3}-\eqref{eHyp4},
\begin{equation} \label{fTR11}
\zeta >0.
\end{equation} 
Integrating~\eqref{fTR5b1} on $(0,t)$, we obtain 
\begin{equation} \label{fTR7} 
- f^{-\frac {\alpha +2} {2}} e -   \frac {\rho   } {\alpha \cos \theta} f^{-\frac {\alpha } {2}}  \ge \zeta .
\end{equation} 
Multiplying~\eqref{fTR7} by $f^{ \frac {\alpha +2} {2}}$ yields
\begin{equation} \label{fTR8} 
-  e     \ge \zeta f^{ \frac {\alpha +2} {2}} +  \frac {\rho  } {\alpha \cos \theta} f   . 
\end{equation} 
On the other hand, it follows from~\eqref{fGGLs} and~\eqref{fGLP8} that
\begin{equation} \label{fTR9}
\frac {df} {dt}  \ge  -2\rho  f  - 2(\alpha +2)\cos \theta \  e . 
\end{equation} 
We deduce from~\eqref{fTR9} and~\eqref{fTR8} that
\begin{equation} \label{fTR10}
\frac {df} {dt} \ge   \frac {4\rho  } {\alpha } f + 2(\alpha +2)\cos \theta \  \zeta f^{ \frac {\alpha +2} {2}}
\ge 2(\alpha +2)\cos \theta \ \zeta f^{ \frac {\alpha +2} {2}}. 
\end{equation} 
It follows easily from~\eqref{fTR11} that  $f$ cannot satisfy~\eqref{fTR10} for all $t>0$, 
so that $\Tma <\infty $. 
This completes the proof.

\end{document}